\documentclass[12pt]{article}
\usepackage{amsmath, amsthm}
\usepackage[top=4cm,left=3cm,right=3cm,bottom=4cm,headheight=13.6pt]{geometry}\usepackage{amsfonts}
\usepackage{amssymb}
\usepackage{fancyhdr}
\usepackage{enumerate}
\usepackage{amsmath,amsthm}
\usepackage[dvips]{graphicx}
\usepackage{color}
\usepackage{eepic}
\usepackage{epic}
\usepackage[dvips]{rotating}
\usepackage{sectsty}
\usepackage{wrapfig}

\theoremstyle{plain}
\newtheorem{theorem}{Theorem}[section]
\newtheorem{lemma}[theorem]{Lemma}

\theoremstyle{definition}

\theoremstyle{remark}

 \numberwithin{equation}{section} 

 \begin{document}
\title{ Blow-up  Rate Estimates for Parabolic Equations}

\author{Maan A. Rasheed and Miroslav Chlebik}
\maketitle

\abstract 
We consider the blow-up sets and the upper blow-up rate estimates for two parabolic problems defined in a ball $B_R$ in $R^n;$ firstly, the semilinear heat equation $u_t= \Delta u + e^{u^p}$ subject to the zero Dirichlet boundary conditions, secondly, the problem of the heat equation $u_t= \Delta u$ with the Neumann boundary condition  $\frac{ \partial u}{\partial \eta}=e^{u^p}$ on $\partial B_R\times (0,T),$ where $p>1,$ $\eta$ is the outward normal. 

 \section{Introduction}
 
In this paper, we study two problems of parabolic equations:

\begin{equation}\label{1}\left. \begin{array}{ll}
u_t= \Delta u + e^{u^p},&\quad (x,t) \in  B_R  \times (0,T),\\
u(x,t)=0,&\quad (x,t) \in  \partial B_R  \times (0,T),\\
u(x,0)=u_0(x),& \quad x \in B_R,
\end{array}\right\}  \end{equation}
and
\begin{equation}\label{b1} \left.
\begin{array}{ll}
u_t= \Delta u,& \quad (x,t) \in  B_R  \times (0,T),\\
\frac{ \partial u}{\partial \eta}=e^{u^p},&  \quad (x,t)\in \partial B_R  \times (0,T),\\
u(x,0)=u_0(x),&  \quad x \in B_R,
\end{array} \right\} \end{equation}
 where   $p>1,$ $B_R$ is a ball in $R^n,$ $\eta$ is the outward normal, $u_0$  is  smooth, nonzero, nonnegative, radially symmetric,  moreover, for problem (\ref{1}) it is further required to be nonincreasing radial function, vanishing on $\partial B_R$ and satisfies the following condition
  \begin{equation} \label{6}
  \Delta u_0(x) + e^{u^p_0(x)} \ge 0, \quad  x   \in{B}_R, \end{equation} 
while for problem (\ref{b1}), it is required to satisfy the following conditions
\begin{equation}\label{b33}
  \frac{ \partial u_0}{\partial \eta}=e^{u_0^p},\quad x  \in  \partial B_R,\end{equation} 
\begin{equation}\label{df} \Delta u_0 \ge 0,\quad x\in \overline{B}_R. \end{equation} 

Blow-up phenomena for reaction-diffusion problems in bounded domain have been studied for the first time in \cite{10} by Kaplan, he showed that, if the convex source terms $f=f(u)$ satisfying the condition \begin{equation}\label{T} \int^\infty_U\frac{du}{f(u)} <\infty,\quad U\ge 1,\end{equation} then diffusion cannot prevent blow-up when the initial state is large enough.

The problem of semilinear  parabolic equation defined in a ball, has been introduced in \cite{1,3,2,4}, for instance, in \cite{1} Friedman and McLeod have studied the zero Dirichlet problem of the semilinear heat equation: \begin{equation}\label{shahed} u_t=\Delta u+f(u),\quad \mbox{in}\quad B_R\times (0,T),\end{equation} under fairly general assumptions on $u_0$ (nonincreasing radial function, vanishing on $\partial B_R$). They have considered the two special cases:
\begin{eqnarray}\label{fort} u_t&=&\Delta u+u^p,\quad p>1, \\
\label{repair} u_t&=&\Delta u+e^u. \end{eqnarray}

For equation (\ref{fort}),  they showed that for any $\alpha >2/(p-1),$ the upper pointwise estimate takes the following form
$$u(x,t)\le C|x|^{-\alpha}, \quad x \in B_R\setminus\{0\}\times (0,T),$$
which shows that the only possible blow-up point is $x=0.$  Moreover, under an additional assumption of monotonicity in time (\ref{6}), the corresponding lower estimate on the blow-up can be established (see\cite{2}) as follows
$$u(x,T)\ge C|x|^{-2/(p-1)},\quad x \in B_{R^*}\setminus\{0\},$$
  for some $R^*\le R,$ $C>0.$ On the other hand, it has been shown in \cite{1} that the upper (lower) blow-up rate estimates take the following form
$$c(T-t)^{-1/(p-1)}\le u(0,t)\le C(T-t)^{-1/(p-1)},\quad t \in  (0,T).$$
For the second case, (\ref{repair}), Friedman and McLeod showed similar results, they proved that the point $x=0$ is the only blow-up point, and that due to the upper pointwise estimate, which takes the following form
$$u(x,t)\le \log C+\frac{2}{ \alpha}\log(\frac{1}{x}),\quad (x,t) \in B_R\setminus\{0\}\times (0,T),$$ where  $0< \alpha<1, C>0.$ 
Moreover, the upper (lower) blow-up rate estimate takes the following form
\begin{equation}\label{Zam} \log c-\log(T-t) \le u(0,t)\le \log C-\log(T-t),\quad t \in  (0,T).\end{equation}

The aim of section two, is to show that the results of Friedman and McLeod hold true for problem (\ref{1}). On other words, we prove that $x=0$ the only possible blow-up point for this problem. Furthermore, we show that the upper blow-up rate estimate takes the following form
$$u(0,t) \le \log C-\frac{1}{p} \log(T-t), \quad t\in (0,T).$$

The problem of the heat equation defined in a ball $B_R$ with a nonlinear Neumann boundary condition, $\frac{\partial u}{\partial \eta}=f(u)$ on $\partial B_R\times (0,T),$ has been introduced in \cite{17,15,16,14}, for instance, in \cite{14}  it has been shown that if $f$ is nondecreasing and $1/f$ is integrable at infinity for $u>0$, then the blow-up occurs  in finite time for any positive initial data $u_0$ (not necessarily radial), moreover, if $f$ is $C^2(0,\infty),$ increasing and  convex  in $(0,\infty),$ then blow-up occurs only on the boundary. 

For the special case, where $f(u)=u^p,$ it has been proved in \cite{15} that for any $u_0,$ the finite time blow-up occurs where $p>1,$ and it occurs only on the boundary. Moreover, it has been shown in \cite{16,22} that the upper (lower) blow-up rate estimate take the following form
$$C_1(T-t)^{\frac{-1}{2(p-1)}}\le\max_{x\in \overline{B}_R}u(x,t)\le C_2(T-t)^{\frac{-1}{2(p-1)}}, \quad t\in (0,T).$$

In \cite{17}, it has been considered the second special case, where, $f(u)=e^u,$ in one dimensional space defined in the domain $(0,1)\times(0,T),$ it has been proved that every positive solution blows up in finite time and the blow-up occurs only on the boundary ($x=1$) and the upper (lower) blow-up rate estimates take the following forms
$$C_1(T-t)^{-1/2}\le e^{u(1,t)}\le C_2(T-t)^{-1/2}, \quad 0<t<T.$$

Section three concerned with the blow-up solutions of  problem (\ref{b1}), we prove that the upper blow-up rate estimate takes the following form $$\max_{\overline{B}_R}u(x,t) \le \log C -\frac{1}{2p}\log(T-t), \quad  0<t<T.$$

\section{Problem (\ref{1})}
\subsection{Preliminaries}
  Since $f(u)=e^{u^p}$ is $C^1([0,\infty))$ function, the existence and uniqueness of local classical solutions to problem (\ref{1}) are well known, see \cite{23,37}. On the other hand,  since the function $f$ is convex on $(0,\infty)$ and satisfies the condition (\ref{T}), therefore, the solutions of problem (\ref{1}) blow up in finite time for large initial function. 

The next lemma shows some properties of the solutions of problem (\ref{1}). We denote for simplicity $u(r,t)=u(x,t).$
  \begin{lemma}\label{Wk}
    Let $u$ be a classical solution of (\ref{1}). Then
\begin{enumerate}[\rm(i)]
 \item $u(x,t)$ is positive and radial, $u_r\le 0$  in $[0,R) \times (0,T).$ Moreover, $u_r<0$  in $(0,R]\times (0,T).$
\item $u_t > 0 ,\quad  (x,t) \in B_R\times (0,T).$
\end{enumerate}
\end{lemma}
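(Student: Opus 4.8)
The plan is to establish the four assertions in sequence --- positivity, radial symmetry, $u_r\le 0$, then $u_t>0$ --- each time deriving a linear parabolic (in)equality for the relevant quantity and invoking a maximum principle; all arguments are carried out on a compact subcylinder $\overline{B}_R\times[0,T']$ with $T'<T$, on which $u$ is a bounded classical solution, and one lets $T'\uparrow T$ at the end. Positivity is immediate from the strong maximum principle, since $u\ge 0$ on the parabolic boundary, $u_0\not\equiv 0$, and $u_t-\Delta u=e^{u^p}>0$. Radial symmetry follows from uniqueness of the classical solution together with the $O(n)$-invariance of (\ref{1}): for orthogonal $Q$ the function $u(Qx,t)$ solves the same problem (here one uses that $u_0$ is radial), hence coincides with $u$. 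Writing $u=u(r,t)$, the equation becomes $u_t=u_{rr}+\frac{n-1}{r}u_r+e^{u^p}$ on $(0,R)\times(0,T)$, with $u_r(0,t)=0$ by smoothness.

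For the radial monotonicity I would put $v=u_r$ and differentiate the radial equation in $r$, obtaining $v_t=v_{rr}+\frac{n-1}{r}v_r-\frac{n-1}{r^2}v+p\,u^{p-1}e^{u^p}\,v$ on $(0,R)\times(0,T)$. On the parabolic boundary one has $v(r,0)=u_0'(r)\le 0$ (monotone initial data), $v(0,t)=0$, and $v(R,t)=u_r(R,t)<0$ by Hopf's boundary point lemma applied to the positive solution $u$, which vanishes on $\partial B_R$. To absorb the zeroth-order coefficient $p\,u^{p-1}e^{u^p}$, which is bounded on $\overline{B}_R\times[0,T']$ but of the wrong sign, I would pass to $w=e^{-\lambda t}v$ with $\lambda$ exceeding that bound; then $w$ solves an equation of the same form with nonpositive zeroth-order coefficient and the same sign conditions on the parabolic boundary. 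A positive interior maximum of $w$ could only occur at some $(r_0,t_0)$ with $r_0\in(0,R)$ --- not at $r_0=0$, where $w=0$, and not elsewhere on the parabolic boundary, where $w\le 0$ --- and there $w_t\ge 0$ while $w_r=0$, $w_{rr}\le 0$ and $-\frac{n-1}{r_0^2}w\le 0$; for $n\ge 2$ this is already contradictory, and in general the weak maximum principle (the singular term having the favourable sign) rules out a positive maximum off the parabolic boundary. Hence $u_r\le 0$ on $\overline{B}_R\times(0,T)$, and the strict inequality on $(0,R]\times(0,T)$ follows from the strong maximum principle for the $v$-equation together with $u_r(R,t)<0$: vanishing of $u_r$ at an interior point would propagate backwards in time and contradict the Hopf inequality at $r=R$ (or force $u_0'\equiv 0$).

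For the time monotonicity I would first observe that $u_0$, regarded as time-independent, is a subsolution of (\ref{1}): $(\partial_t-\Delta)u_0-e^{u_0^p}=-(\Delta u_0+e^{u_0^p})\le 0$ by hypothesis (\ref{6}), and it matches $u$ on the parabolic boundary, so comparison gives $u(x,t)\ge u_0(x)$ for all $t$. Consequently, for $h>0$, the function $w_h(x,t)=u(x,t+h)-u(x,t)$ is nonnegative at $t=0$, vanishes on $\partial B_R$, and solves $\partial_t w_h=\Delta w_h+c_h\,w_h$ with $c_h\ge 0$ bounded (a difference quotient of the increasing map $s\mapsto e^{s^p}$); the same rescaled maximum principle yields $w_h\ge 0$, and dividing by $h$ and letting $h\downarrow 0$ gives $u_t\ge 0$. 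Since $z=u_t$ solves the linear equation $z_t=\Delta z+p\,u^{p-1}e^{u^p}z$, the strong maximum principle upgrades this to $u_t>0$ in $B_R\times(0,T)$, the alternative $u_t\equiv 0$ being excluded because $u$ blows up in finite time. I expect the only genuinely delicate point to be the maximum-principle argument for $v=u_r$, where the singular coefficient $-\frac{n-1}{r^2}$ and the wrong-sign term $p\,u^{p-1}e^{u^p}$ must both be accommodated --- the former using $u_r(0,t)=0$ and the favourable sign of the singular term, the latter by the exponential rescaling --- so that the standard parabolic maximum principles apply on each $\overline{B}_R\times[0,T']$.
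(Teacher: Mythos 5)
Your proof is correct, and it is worth noting that the paper itself states this lemma without proof (these are standard facts of Friedman--McLeod type), so your write-up supplies exactly the argument one would expect: positivity and radial symmetry by the strong maximum principle and uniqueness under rotations, $u_r\le 0$ by differentiating the radial equation, handling the singular term $-\tfrac{n-1}{r^2}v$ via $v(0,t)=0$ and its favourable sign, absorbing the zeroth-order term $p u^{p-1}e^{u^p}$ by the rescaling $e^{-\lambda t}$ on compact subcylinders, and the strict inequalities via Hopf's lemma and the strong maximum principle; $u_t\ge 0$ via the subsolution property (\ref{6}) of $u_0$ and time-shifted comparison. The only point to tighten is the upgrade from $u_t\ge 0$ to $u_t>0$: you exclude $u_t\equiv 0$ by appealing to finite-time blow-up, which is the paper's setting but not a hypothesis of the lemma as stated; it is cleaner to observe that if $u_t$ vanished at some interior point, the strong maximum principle applied to $z=u_t$ (which solves $z_t=\Delta z+pu^{p-1}e^{u^p}z$ with $z=0$ on $\partial B_R\times(0,T)$) would force $u_t\equiv 0$ for earlier times, hence $u(\cdot,t)\equiv u_0$ and $\Delta u_0+e^{u_0^p}\equiv 0$, i.e.\ $u_0$ would be a steady state --- which is excluded in the context of the paper (and in particular for any solution that blows up). With that remark, your argument stands as a complete proof of the lemma.
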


\subsection{Pointwise Estimates}

This subsection considers the pointwise estimate to the solutions of problem (\ref{1}),  which shows that the blow-up cannot occur if $x$ is not equal zero. In order to prove that, we need first to recall the following lemma, which has been proved by Friedman and McLeod in \cite{1}.

\begin{lemma}
Let $u$ be a blow-up solution of the zero Dirichlet problem of (\ref{shahed}) with $u_0$ is nonzero, nonincreasing radial function vanishing on $\partial B_R.$  Also suppose that 
\begin{equation}\label{Frid} u_{0r}(r)\le -\delta r,\quad\mbox~ for ~0<r\le R,\quad\mbox{where}~\delta>0.\end{equation}
If there exist $F\in C^2(0,\infty)\cap C^{1}([0,\infty)),$ such that $F$ is positive in $(0,\infty)$ and satisfies  \begin{equation}\label{mar} \int^\infty_s \frac{du}{F(u)}< \infty,\quad F^{'},F^{''}\ge 0\quad \mbox{in}~(0,\infty). \end{equation} Also if it satisfies with $f$ the following condition,
\begin{equation}\label{maan}
f^{'}F-fF^{'}\ge 2\varepsilon FF^{'} \quad \mbox{in} \quad (0,\infty),
\end{equation} then the function
$J=r^{n-1}u_r+\varepsilon r^nF(u)$ is nonpositive  in $B_R\times (0,T)$  for some $\varepsilon >0.$
\end{lemma}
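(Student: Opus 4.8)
The plan is to prove the statement by the parabolic maximum principle applied directly to $J$, after establishing that $J$ satisfies a linear parabolic differential inequality. Writing the equation radially, $u_t = u_{rr} + \frac{n-1}{r}u_r + f(u)$ on $(0,R)\times(0,T)$, the first step is to differentiate it in $r$: then $v := u_r$ satisfies
\[
v_t = v_{rr} + \frac{n-1}{r}v_r - \frac{n-1}{r^2}v + f'(u)\,v \qquad \text{on } (0,R)\times(0,T).
\]
I will also use that $u$ is radial with $u_r\le 0$, which holds for a radial nonincreasing $u_0$ by the usual reflection and comparison arguments.

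The core of the proof is the computation of $\mathcal{L}J$, where $\mathcal{L}w := w_t - w_{rr} + \frac{n-1}{r}w_r - f'(u)w$ — note that the sign of the first-order term is \emph{opposite} to that in the radial Laplacian, which is exactly the effect produced by the weight $r^{n-1}$. A direct computation gives $\mathcal{L}\big(r^{n-1}v\big)=0$, since the factor $r^{n-1}$ conjugates the equation for $v$ into precisely this operator, while for the second piece one finds
\[
\mathcal{L}\big(\varepsilon r^n F(u)\big) = -2\varepsilon r^{n-1}F'(u)\,v \;-\; \varepsilon r^n F''(u)\,v^2 \;-\; \varepsilon r^n\big(f'(u)F(u) - f(u)F'(u)\big).
\]
Adding the two and using $v = r^{1-n}\big(J - \varepsilon r^n F(u)\big)$ to replace $v$ in the first term (it becomes $-2\varepsilon F'(u)J + 2\varepsilon^2 r^n F(u)F'(u)$) yields
\[
J_t - J_{rr} + \frac{n-1}{r}J_r + \big(2\varepsilon F'(u) - f'(u)\big)J \;=\; -\varepsilon r^n F''(u)\,v^2 \;-\; \varepsilon r^n\big[(f'F - fF') - 2\varepsilon FF'\big].
\]
By (\ref{mar}) we have $F''\ge 0$, and by (\ref{maan}) the bracket is nonnegative (recall $FF'\ge 0$), so the right-hand side is $\le 0$. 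Hence $J$ satisfies $J_t - J_{rr} + \frac{n-1}{r}J_r + c(r,t)J \le 0$ with $c = 2\varepsilon F'(u) - f'(u)$, which is bounded on every slab $[0,R]\times[0,T']$, $T'<T$, since $u$ is bounded up to time $T'$.

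It then remains to control $J$ on the parabolic boundary and to invoke the maximum principle. At $r=0$ both terms of $J$ vanish (by radial symmetry $u_r(0,t)=0$, and the weights $r^{n-1}$, $r^n$ vanish), so $J(0,t)=0$ and a positive supremum of $J$ cannot be attained there; this also neutralises the singularity of the coefficient $\frac{n-1}{r}$. At $t=0$, hypothesis (\ref{Frid}) gives $J(r,0) \le r^n\big(\varepsilon F(u_0(r)) - \delta\big) \le 0$ provided $\varepsilon \le \delta/\max_{\overline{B}_R}F(u_0)$. At $r=R$ we have $u(R,t)=0$, so $J(R,t) = R^{n-1}u_r(R,t) + \varepsilon R^n F(0)$; if $F(0)=0$ this is $\le 0$ since $u_r(R,t)\le 0$, and otherwise one first derives a uniform lower bound $-u_r(R,t)\ge c_T>0$ on $[0,T)$ — by comparing $u$ from below with the caloric function carrying the same initial and boundary data (legitimate since $f\ge 0$, so $u$ is a heat supersolution) and applying Hopf's lemma to that comparison function — and then takes $\varepsilon$ small enough that $\varepsilon R F(0)\le c_T$. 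Fixing $\varepsilon$ small enough to meet all these constraints (and no larger than the $\varepsilon$ appearing in (\ref{maan}), which only weakens that hypothesis), the parabolic maximum principle — after the substitution $J = e^{\lambda t}\tilde J$ to handle the sign-indefinite zeroth-order term — gives $J\le 0$ on $[0,R]\times[0,T']$ for all $T'<T$, hence on $B_R\times(0,T)$.

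The only genuinely non-computational step, and the one I expect to be the main obstacle, is securing $J(R,t)\le 0$ when $F(0)>0$: the maximum principle needs this, and it does not follow from $u_r(R,t)<0$ alone but requires the uniform-in-$t$ lower bound on $-u_r(R,t)$ indicated above. By contrast, the long identity for $\mathcal{L}J$, which constitutes the bulk of the work, is purely mechanical.
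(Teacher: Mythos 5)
Your proof is correct and is essentially the argument the paper relies on: the paper does not prove this lemma itself but quotes it from Friedman and McLeod \cite{1}, and your computation of the parabolic inequality for $J$, the choice of small $\varepsilon$ from (\ref{Frid}) at $t=0$, the vanishing of $J$ at $r=0$, and the Hopf-type uniform bound $-u_r(R,t)\ge c>0$ (via comparison with the caloric minorant, using $f\ge 0$ as in the intended setting $f(u)=e^{u^p}$) reproduce that standard proof. No genuine gap; the only implicit point is that near $t=0$ the bound at $r=R$ can be taken from $u_{0r}(R)\le -\delta R$ by continuity, which your setup already provides.
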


\begin{theorem}\label{BN}
Let $u$ be a blow-up solution of problem (\ref{1}). Also suppose that $u_0$ satisfies (\ref{Frid}). Then $x=0$ is the only blow-up point. \end{theorem}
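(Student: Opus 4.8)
The plan is to reduce everything to the Friedman--McLeod lemma recalled just above, applied to problem (\ref{1}) (which is the zero Dirichlet problem (\ref{shahed}) with $f(u)=e^{u^{p}}$) with a judiciously chosen auxiliary function $F$. Since $f'(u)=p u^{p-1}e^{u^{p}}$, so that $f'(u)/f(u)=p u^{p-1}$, the natural candidate is
\[
F(u)=e^{\lambda u^{p}}\qquad\text{with a fixed }\lambda\in(0,1).
\]
First I would check that this $F$ satisfies the structural hypotheses of that lemma: $F>0$ on $(0,\infty)$; $F\in C^{2}(0,\infty)$, and $F\in C^{1}([0,\infty))$ with $F'(0)=0$, because $p>1$ forces $F'(u)=\lambda p u^{p-1}e^{\lambda u^{p}}\to 0$ as $u\to 0^{+}$; $F',F''\ge 0$ on $(0,\infty)$; and $\int^{\infty}_{s}e^{-\lambda u^{p}}\,du<\infty$, again since $p>1$. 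All of these are immediate.

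The only computation of substance is the compatibility condition (\ref{maan}). With $f(u)=e^{u^{p}}$ and $F(u)=e^{\lambda u^{p}}$ one gets $f'F-fF'=(1-\lambda)p u^{p-1}e^{(1+\lambda)u^{p}}$ and $2\varepsilon F F'=2\varepsilon\lambda p u^{p-1}e^{2\lambda u^{p}}$, so (\ref{maan}) is equivalent to $(1-\lambda)e^{(1-\lambda)u^{p}}\ge 2\varepsilon\lambda$ for all $u>0$; since $1-\lambda>0$ and $e^{(1-\lambda)u^{p}}\ge 1$, this holds for every $\varepsilon\in(0,(1-\lambda)/(2\lambda)]$. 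Because $u_{0}$ is a nonzero, nonincreasing, radial function vanishing on $\partial B_{R}$ and satisfies (\ref{Frid}), the preceding lemma now applies and gives some $\varepsilon>0$ with
\[
J:=r^{n-1}u_{r}+\varepsilon r^{n}e^{\lambda u^{p}}\le 0\quad\text{in }B_{R}\times(0,T).
\]

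It remains to turn $J\le 0$ into a pointwise bound. For $0<r<R$ and $0<t<T$ this inequality reads $e^{-\lambda u^{p}}u_{r}\le-\varepsilon r$, i.e. $\frac{d}{dr}\Phi\big(u(r,t)\big)\le-\varepsilon r$, where $\Phi(v):=\int_{0}^{v}e^{-\lambda s^{p}}\,ds$ is increasing and bounded above by $\Phi_{\infty}:=\int_{0}^{\infty}e^{-\lambda s^{p}}\,ds<\infty$. Integrating from $\rho$ to $r$ with $0<\rho<r<R$ and letting $\rho\to 0^{+}$ (legitimate since $u(\cdot,t)$ is smooth for $t<T$ and $\Phi(u(0,t))\le\Phi_{\infty}$) yields
\[
\int_{u(r,t)}^{\infty}e^{-\lambda s^{p}}\,ds=\Phi_{\infty}-\Phi\big(u(r,t)\big)\ \ge\ \tfrac{\varepsilon}{2}\,r^{2},\qquad 0<r<R,\ 0<t<T.
\]
Fix $a\in(0,R)$. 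If $u(a,t_{k})\to\infty$ along some sequence $t_{k}\to T$, the left-hand side would tend to $0$, contradicting $\ge\tfrac{\varepsilon}{2}a^{2}>0$; hence $\sup_{0<t<T}u(a,t)<\infty$. Since $u(\cdot,t)$ is nonincreasing in $r$ by Lemma~\ref{Wk}, $u$ is therefore bounded on $\{a\le|x|\le R\}\times(0,T)$ for every $a\in(0,R)$, so no point with $|x|>0$ can be a blow-up point. As $u$ is a blow-up solution whose spatial maximum is attained at $x=0$ (Lemma~\ref{Wk}), it follows that $x=0$ is the only blow-up point.

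The step I expect to be most delicate is the choice of $F$: it must be integrable at infinity while simultaneously being flat enough near $u=0$ both for membership in $C^{1}([0,\infty))$ and for (\ref{maan}) to hold down to the origin. Power-type choices fail, because then $F'/F$ blows up as $u\to0^{+}$ while $f'/f=p u^{p-1}$ vanishes there, so (\ref{maan}) breaks down near $u=0$; the ansatz $F=e^{\lambda u^{p}}$ with $\lambda<1$ is tailored precisely so that $F'/F=\lambda\,(f'/f)$ with $\lambda<1$, which makes (\ref{maan}) valid on all of $(0,\infty)$. Once this is in hand, everything else is the routine integration above together with an appeal to the cited Friedman--McLeod lemma.
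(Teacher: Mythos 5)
Your proposal is correct and follows essentially the same route as the paper: the same auxiliary function $F(u)=e^{\delta u^{p}}$ (your $\lambda$ is the paper's $\delta$), the same verification of (\ref{mar}) and (\ref{maan}), and the same integration of $J\le 0$ to bound $\int_{u(r,t)}^{\infty}e^{-\delta s^{p}}ds$ from below by $\tfrac{\varepsilon}{2}r^{2}$, yielding the contradiction. Your explicit admissible range for $\varepsilon$ and the careful limit $\rho\to 0^{+}$ only make more precise what the paper states as ``$\varepsilon,\delta$ small enough.''
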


\begin{proof}
  Let  $$F(u)=e^{ \delta u^p},\quad 0<\delta <1.$$ It is clear that $F$ satisfies (\ref{mar}).
The next aim is to show that the inequality (\ref{maan}) holds.

A direct calculation shows 
\begin{eqnarray} \label{ead}  f^{'}(u)F(u)-f(u)F^{'}(u)&=&pu^{p-1}e^{(1+\delta)u^p}-\delta pu^{p-1} e^{(1+\delta)u^p} \\
&=&pu^{p-1}e^{(1+\delta)u^p}[1-\delta] \nonumber.\end{eqnarray} 
On the other hand,\begin{equation}\label{eaad} 2\varepsilon F(u)F^{'}(u)=2\varepsilon \delta pu^{p-1}e^{2\delta u^p}. \end{equation}
From (\ref{ead}), (\ref{eaad}) it is clear that (\ref{maan}) holds true
provided $\varepsilon ,\delta$ are small enough.

Thus$$J=r^{n-1}u_r+\varepsilon r^n e^{ \delta u^p} \le 0,\quad (r,t)\in (0,R)\times (0,T),$$
or \begin{equation}\label{yah}-\frac{u_r}{e^{ \delta u^p}} \ge \varepsilon r. \end{equation}
Let $G(s)=\int^\infty_s \frac{du}{ e^{ \delta u^p}}.$

It is clear that
$$\frac{d}{dr}G(u(r,t))=\frac{d}{dr}\int_u^\infty\frac{du}{e^{ \delta u^p}}=-\frac{d}{dr}\int_\infty^u\frac{du}{e^{ \delta u^p}}=-\frac{d}{du}\int_\infty^u\frac{u_r}{e^{ \delta u^p}}du=-\frac{u_r}{e^{ \delta u^p}}.$$

Thus, by (\ref{yah}), we obtain $$G(u(r,t))_r\ge \varepsilon r.$$
Now, integrate the last equation from $0$ to $r$
$$G(u(r,t))-G(u(0,t))\ge \frac{1}{2}\varepsilon r^2.$$
It follows
\begin{equation}\label{faw}
G(u(r,t))\ge \frac{1}{2}\varepsilon r^2.
\end{equation}
If for some $r>0,$ $u(r,t)\rightarrow \infty,$  as $t\rightarrow T,$ then $G(u(r,t))\rightarrow 0,$  as $t\rightarrow T,$ a contradiction to (\ref{faw}).
\end{proof}

\subsection{Blow-up Rate Estimate}
  The following theorem considers the upper bounds of the blow-up rate for problem (\ref{1}).
  \begin{theorem}\label{see}
  Let $u$  be a solution of (\ref{1}), which blows up at only $x=0,$ in finite time $T.$ Then there exists a positive constant $C$  such that
\begin{equation}\label{Sf} u(0,t) \le \log C-\frac{1}{p} \log(T-t), \quad t\in (0,T).\end{equation}
\end{theorem}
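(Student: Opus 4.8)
The plan is to adapt the classical Friedman--McLeod argument for $u_t=\Delta u+e^u$ (which yields the estimate $\eqref{Zam}$) to the equation with source $e^{u^p}$. The idea is to bound $u_t$ from above by a multiple of the source term $e^{u^p}$ pointwise, and then integrate an ODE differential inequality at the origin. Concretely, I would introduce the auxiliary function
\begin{equation}\label{plan1}
J(r,t)=u_t-\varepsilon\, \phi(u),
\end{equation}
where $\phi$ is chosen comparable to $e^{u^p}$ near $u=\infty$ (for instance $\phi(u)=e^{\delta u^p}$ with $0<\delta<1$, consistent with the choice already used in the proof of Theorem~\ref{BN}), and $\varepsilon>0$ is small. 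The goal is to show $J\ge 0$ on a suitable subdomain $B_\rho\times(t_0,T)$ for some small $\rho$ and $t_0$ close to $T$, using the parabolic maximum principle: one computes that $J$ satisfies a parabolic inequality of the form $J_t-\Delta J - c(x,t)J \ge (\text{nonnegative terms})$, where the key algebraic fact needed is an inequality relating $f=e^{u^p}$, $\phi$, $\phi'$, $\phi''$ of the type $f\phi' - \varepsilon \phi \phi'' \ge 0$ or similar (this is where the convexity and the precise exponents enter, and it is the reason a sub-exponent $\delta<1$ is forced). On the parabolic boundary, $u_t>0$ by Lemma~\ref{Wk}(ii) while $\phi(u)$ stays bounded there since $x=0$ is the only blow-up point, so choosing $\varepsilon$ small makes $J\ge 0$ initially and on the lateral boundary.

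Once $J\ge0$ is established, at $x=0$ we get $u_t(0,t)\ge \varepsilon\, e^{\delta u(0,t)^p}$. To extract the estimate $\eqref{Sf}$, I would instead want the reverse-type bound $u_t(0,t)\le C e^{u(0,t)^p}$; this comes directly from the equation $u_t=\Delta u + e^{u^p}$ together with the fact that at the maximum point $x=0$ we have $\Delta u(0,t)\le 0$ (since $u$ is radially nonincreasing by Lemma~\ref{Wk}(i)), hence
\begin{equation}\label{plan2}
u_t(0,t)\le e^{u(0,t)^p},\qquad t\in(0,T).
\end{equation}
Writing $m(t)=u(0,t)$, this is the ODE inequality $m'(t)\le e^{m(t)^p}$. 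Separating variables gives $\int_{m(t)}^{\infty} e^{-s^p}\,ds \ge \int_{m(t)}^{\infty}\frac{ds}{e^{s^p}} \ge$ something like $(T-t)$ — more precisely one integrates $m'e^{-m^p}\le 1$ from $t$ to $T$ and uses that $m(t)\to\infty$ as $t\to T$, obtaining $\int_t^T 1\,d\tau \ge \int_{m(t)}^\infty e^{-s^p}\,ds$. The remaining task is to estimate the incomplete integral $\int_{M}^{\infty} e^{-s^p}\,ds$ from below by something of the form $c\, M^{1-p} e^{-M^p}$ as $M\to\infty$ (integration by parts / Laplace-type asymptotics), which upon rearranging $T-t\ge c\, m(t)^{1-p}e^{-m(t)^p}$ and taking logarithms yields $m(t)^p \le \log C - \log(T-t) + (p-1)\log m(t)$, i.e. $u(0,t)^p \le -\log(T-t)+O(\log(-\log(T-t)))$, and finally $u(0,t)\le (\log C - \log(T-t))^{1/p} \le \log C - \frac1p\log(T-t)$ for $t$ near $T$ (adjusting $C$; for $t$ bounded away from $T$ the bound is trivial since $u$ is smooth there).

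I expect the main obstacle to be the careful handling of the asymptotic lower bound for $\int_M^\infty e^{-s^p}\,ds$ and then checking that the resulting implicit inequality for $m(t)$ genuinely gives the clean bound $\eqref{Sf}$ rather than a weaker one — in particular absorbing the parasitic $\log m(t)$ term. An alternative, perhaps cleaner route that avoids the incomplete-integral asymptotics is: from $\eqref{plan2}$ set $v(t)=e^{m(t)^p}$; then $v'(t)=p\,m^{p-1}m'(t)e^{m^p}\le p\,m^{p-1}v(t)^2$, and since $m^{p-1}$ grows slower than any power of $v$, one derives $v(t)\le C(T-t)^{-1}(\text{up to lower order})$, i.e. $e^{m(t)^p}\le C(T-t)^{-1+o(1)}$, whence $m(t)^p\le -\log(T-t)+\log C$ and $u(0,t)=m(t)\le(\log C-\log(T-t))^{1/p}$. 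One then observes $(\log C - \log(T-t))^{1/p}\le \log C' - \frac1p\log(T-t)$ for $T-t$ small, which is exactly $\eqref{Sf}$. A secondary technical point is justifying $\Delta u(0,t)\le 0$ rigorously at the origin in $n$ dimensions from radial monotonicity (this follows from $u_{rr}+\frac{n-1}{r}u_r$ and l'Hôpital at $r=0$ giving $\Delta u(0,t)=n\,u_{rr}(0,t)$ with $u_{rr}(0,t)\le 0$, which needs $u_r(r,t)\le 0$ near $0$, already available from Lemma~\ref{Wk}).
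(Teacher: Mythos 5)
Your first paragraph is essentially the paper's own argument: the paper takes $F=u_t-\alpha e^{u^p}$ (i.e.\ $\phi=f$ itself, which keeps the computation clean, since then $F_t-\Delta F-f'(u)F=\alpha|\nabla u|^2f''(u)\ge0$ with no extra terms to control), verifies $F\ge0$ on the parabolic boundary of $\overline{B}_\varepsilon\times[\tau,T)$ exactly as you indicate, and concludes $u_t(0,t)\ge\alpha e^{u^p(0,t)}$ for $t$ near $T$. The genuine gap is in the second half: you then discard this inequality and try to conclude from the reverse bound $u_t(0,t)\le e^{u(0,t)^p}$ (via $\Delta u(0,t)\le0$). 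That has the ODE comparison backwards. Since blow-up occurs exactly at time $T$, a \emph{lower} bound on the growth rate, $u_t\ge\alpha g(u)$, forces $u(0,t)$ to stay below the corresponding ODE profile and yields the upper estimate \eqref{Sf}; an \emph{upper} bound $u_t\le g(u)$ can only force $u(0,t)$ to be already large near $T$, i.e.\ it gives the companion lower blow-up rate estimate, not \eqref{Sf}. Your own algebra shows the sign slip: from $T-t\ge c\,m^{1-p}e^{-m^p}$, taking logarithms gives $-m^p\le\log(T-t)+(p-1)\log m-\log c$, i.e.\ $m(t)^p\ge-\log(T-t)-(p-1)\log m(t)+\log c$, the opposite of the inequality you wrote; likewise in your alternative route, $v'\le p\,m^{p-1}v^2$ integrates to $1/v(t)\le p\int_t^T m^{p-1}\,d\tau$, a lower bound on $v(t)$, not the claimed $v(t)\le C(T-t)^{-1}$.

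The repair is simply to integrate the inequality you had already established at the end of your first paragraph. This is what the paper does: since $u(0,t)\to\infty$, for $t$ close to $T$ one has $u(0,t)\ge p^{1/(p-1)}$, hence $e^{u^p(0,t)}\ge e^{pu(0,t)}$, so $u_t(0,t)\ge\alpha e^{pu(0,t)}$; integrating from $t$ to $T$ and using $e^{-pu(0,t)}\to0$ as $t\to T$ gives $e^{-pu(0,t)}\ge p\alpha(T-t)$, which is exactly \eqref{Sf} after adjusting the constant on the compact time interval away from $T$. If you insist on $\phi(u)=e^{\delta u^p}$ with $\delta<1$ (which is needed in Theorem \ref{BN}, but not here), the same trick works with the threshold $u\ge(p/\delta)^{1/(p-1)}$, at the cost of extra terms in the parabolic inequality for $J$ that you would have to absorb; and if you estimate the incomplete integral $\int_M^\infty e^{-\delta s^p}\,ds\le C M^{1-p}e^{-\delta M^p}$ instead of using $e^{\delta u^p}\ge e^{pu}$, you in fact get the sharper bound $u(0,t)\le C\bigl(-\log(T-t)\bigr)^{1/p}$, of which \eqref{Sf} is a weaker consequence.
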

\begin{proof}
Define the  function $F$ as follows,
$$F(x,t)=u_t-\alpha f(u), \quad (x,t) \in B_ R \times (0,T),$$ where $f(u)=e^{u^p},~\alpha >0.$

A direct calculation shows
\begin{eqnarray*} F_t-\Delta F&=&u_{tt}-\alpha f^{'}u_t-\Delta u_t + \alpha \Delta f(u),\\
&=&u_{tt}-\Delta u_t -\alpha f^{'}[u_t-\Delta u]+\alpha {|\nabla u|}^2 f^{''},\\
&=&f^{'}u_t-\alpha f^{'}f(u)+\alpha {|\nabla u|}^2 f^{''}.\end{eqnarray*} Thus
\begin{equation}\label{13}
F_t-\Delta F-f^{'}(u)F=\alpha {|\nabla u|}^2 f^{''} \ge 0, \quad (x,t) \in B_R \times (0,T),
\end{equation}  due to $f^{''}(u)>0,$ for $u$ in $(0,\infty).$ 

Since, $f^{'}$ is continuous, therefore, $f^{'}(u)$ is bounded in $\overline{B}_R\times [0,t],$ for $t<T.$

By Lemma \ref{Wk}, $u_t(x,t) >0,~ \mbox {in}~B_R \times (0,T),$ and since $u$ blows up at $x=0,$ therefore, there exist $ k>0,$ $\varepsilon \in (0,R),$ $\tau\in (0,T)$ such that
$$u_t(x,t)\ge k,\quad (x,t)\in \overline B_\varepsilon \times [\tau,T).$$
Also, we can find $\alpha >0$ such that $u_t(x,\tau)\ge \alpha f(u(x,\tau)),$ for $x \in  B_\varepsilon.$ 
Thus \begin{equation}\label{14} F(x,\tau)\ge 0 \quad\mbox{for}~ x \in  B_\varepsilon.\end{equation}

 On the other hand, because of $u$ blows up at only $x=0,$ there exists $C_0>0$ such that  $$f(u(x,t)) \le C_0 < \infty, \quad \mbox{in}\quad \partial{B}_\varepsilon\times (0,T),$$ 
If we choose $\alpha$ is small enough such that $k\ge \alpha C_0,$ then we get
\begin{equation}\label{15}
F(x,t)\ge 0, \quad (x,t) \in \partial B_\varepsilon \times [\tau, T), \end{equation} 

By (\ref{13}), (\ref{14}), (\ref{15}) and maximum principle \cite{21}, it follows that 
$$F(x,t)\ge 0, \quad (x,t) \in \overline{B}_\varepsilon \times (\tau,T).$$
Thus \begin{equation}\label{16}
u_t(0,t) \ge \alpha e^{u^p(0,t)},  \quad \mbox{for}\quad \tau\le t < T.
\end{equation}
Since $u$ is increasing in time and blows at $T,$ there exist $\tau^*\le \tau$ such that
 $$u(0,t)\ge p^{\frac{1}{(p-1)}}\quad \mbox{for}\quad \tau^* \le t< T,$$ 
provided $\tau$ is close enough to $T,$ which leads to 
\begin{equation}\label{17}
e^{u^p(0,t)}\ge e^{pu(0,t)},\quad \tau^*\le t< T.
\end{equation}
From (\ref{16}), (\ref{17}), it follows that
\begin{equation}\label{19}
u_t(0,t) \ge \alpha e^{pu(0,t)},  \quad \mbox{for}\quad \tau\le t< T.
\end{equation}
Integrate (\ref{19}) from $t$ to $T$
\begin{equation*}\label{20}
 \int_t^{T} u_t(0,t)e^{-pu(0,t)} \ge \alpha (T-t).
\end{equation*}
Thus
\begin{equation}\label{21}
-\frac{1}{p}e^{-pu(0,t)} |^T_t \ge \alpha(T-t).
\end{equation}
Since
\begin{equation*}\label{22}
u(0,t) \rightarrow \infty, \quad e^{-pu(0,t)}  \rightarrow 0,\quad \mbox{as} \quad t \rightarrow T,
\end{equation*}
therefore, (\ref{21}) becomes
\begin{equation*}\label{23}
\frac{1}{ e^{pu(0,t)}}\ge p\alpha (T-t).
\end{equation*}
Thus
$$e^{pu(0,t)}(T-t) \le C^*, \quad C^*=1/(p\alpha),\quad t\in [\tau,T)$$
Therefore, there exist a positive constant $C$ such that 
$$u(0,t) \le \log C-\frac{1}{p} \log(T-t), \quad t\in (0,T).$$
\end{proof}

\section{Problem (\ref{b1})}

\subsection{Preliminaries}
The local existence of the unique classical solutions to problem (\ref{b1}) is well known (see \cite{14}). On the other hand, since $f(u)=e^{u^p}$ is $C^2(0,\infty),$ increasing, positive function in $(0,\infty)$ and $1/f$ is integrable at infinity for $u>0$, moreover, $f$ is convex ($f^{''}(u)> 0,\forall u> 0$). Therefore, according to the result of \cite{14}, the solutions of problem (\ref{b1}) blow up infinite time and the blow-up occurs only on the boundary.

 The following lemma shows some properties of the solutions of problem (\ref{b1}). We denote for simplicity $u(r,t)=u(x,t).$
  \begin{lemma}\label{asda}
    Let $u$ be a classical unique solution to problem (\ref{b1}).Then
  \begin{enumerate}[\rm(i)]
\item $u> 0,$  radial  on  $\overline{B}_R\times (0,T).$ Moreover, 
$u_r \ge 0,$ in $[0,R]\times [0,T).$
 \item   $u_t>0$ in $\overline B_R\times (0,T).$ 
 Moreover, if $\Delta u_0\ge a>0,$ in $\overline B_R,$ then $u_t \ge a,$ in $\overline{B}_R\times [0,T).$
  \end{enumerate}
\end{lemma}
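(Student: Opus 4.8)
The plan is to prove the three assertions in turn, each by applying a maximum-principle argument to a suitable auxiliary quantity, in close analogy with Lemma~\ref{Wk}.

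\emph{Radial symmetry and positivity.} Radial symmetry follows from uniqueness of the classical solution: for every rotation $\rho\in O(n)$ the function $(x,t)\mapsto u(\rho x,t)$ again solves (\ref{b1}) with the same data, since $\Delta$, the ball $B_R$ and the outward normal derivative are rotation invariant and $u_0$ is radial, so $u(\rho x,t)\equiv u(x,t)$. For positivity I would apply the minimum principle to $u$ on $\overline B_R\times[0,T']$, $T'<T$: a nonpositive minimum cannot occur at $t=0$ (there $u=u_0\ge0$, $\not\equiv0$); it cannot occur at an interior point, since the strong minimum principle would make $u$ constant and then $\partial u/\partial\eta\equiv0$, contradicting $\partial u/\partial\eta=e^{u^p}>0$; and if it occurred at some $(x_0,t_0)\in\partial B_R\times(0,T']$, Hopf's lemma would give $\partial u/\partial\eta(x_0,t_0)<0$, again contradicting $\partial u/\partial\eta=e^{u^p}>0$. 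Hence $u>0$ on $\overline B_R\times(0,T)$.

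\emph{Monotonicity in $r$.} Note first that (\ref{df}) forces $u_{0r}\ge0$: from $\Delta u_0=r^{1-n}(r^{n-1}u_{0r})_r\ge0$, the function $r^{n-1}u_{0r}$ is nondecreasing and vanishes at $r=0$, hence is $\ge0$ on $[0,R]$. Next put $J=r^{n-1}u_r$. Differentiating $u_t=u_{rr}+\frac{n-1}{r}u_r$ in $r$ and regrouping shows that $J$ solves the parabolic equation $J_t=J_{rr}-\frac{n-1}{r}J_r$ on $(0,R)\times(0,T)$, which carries \emph{no} zeroth-order term. Its parabolic-boundary values are $J(0,t)=0$, $J(R,t)=R^{n-1}e^{u(R,t)^p}>0$ and $J(r,0)=r^{n-1}u_{0r}(r)\ge0$, so the minimum principle (used on $[\delta,R]\times[0,T']$ and letting $\delta\to0$, legitimate since $J$ is continuous up to $r=0$ with $J(0,t)=0$) yields $J\ge0$, i.e. $u_r\ge0$ on $[0,R]\times[0,T)$.

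\emph{Monotonicity in $t$.} Put $w=u_t$. Differentiating the equation and the boundary condition in $t$, and reading $w(\cdot,0)=\Delta u_0$ off the equation at $t=0$, gives
\[
w_t=\Delta w\ \text{ in }B_R\times(0,T),\qquad \frac{\partial w}{\partial\eta}=c(x,t)\,w\ \text{ on }\partial B_R\times(0,T),\qquad w(\cdot,0)=\Delta u_0\ge0,
\]
with $c=p\,u^{p-1}e^{u^p}$, which is $\ge0$ and, by the first part, $>0$ for $t>0$. Granting $w\ge0$, strict positivity follows: $w\not\equiv0$ (else $u$ would be stationary, hence, being radial and harmonic, constant in $x$, contradicting (\ref{b33})), so the strong maximum principle gives $w>0$ in $B_R\times(0,T)$, while a boundary zero $w(x_0,t_0)=0$ with $x_0\in\partial B_R$, $t_0>0$, is impossible because Hopf's lemma would force $\partial w/\partial\eta(x_0,t_0)<0$ whereas the boundary condition gives $\partial w/\partial\eta(x_0,t_0)=c(x_0,t_0)\cdot 0=0$. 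The refinement under $\Delta u_0\ge a>0$ comes from running the same argument for $w-a$, which solves the heat equation, starts from $\Delta u_0-a\ge0$, and on the boundary obeys $\partial(w-a)/\partial\eta=c(w-a)+ca\ge c(w-a)$.

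\emph{Main obstacle.} The one genuinely delicate point is the nonnegativity $w\ge0$, and the difficulty is exactly the Robin boundary condition with a \emph{nonnegative} coefficient $c$: at a hypothetical boundary minimum with $w<0$ the relation $\partial w/\partial\eta=cw$ produces a nonpositive normal derivative, which is \emph{consistent} with Hopf's lemma rather than contradicting it, so the bare weak-maximum-principle scheme does not close. Instead I would use an energy estimate: testing $w_t=\Delta w$ against $-w^-$ and integrating by parts gives
\[
\frac12\,\frac{d}{dt}\int_{B_R}(w^-)^2 = -\int_{B_R}|\nabla w^-|^2+\int_{\partial B_R}c\,(w^-)^2,
\]
and the boundary term is then absorbed, via the Sobolev trace inequality, into $\int_{B_R}|\nabla w^-|^2$ plus a multiple of $\int_{B_R}(w^-)^2$; since $w^-(\cdot,0)=(\Delta u_0)^-=0$, Gronwall's inequality forces $w^-\equiv0$. (Equivalently, one may invoke that the heat semigroup with Robin boundary conditions is positivity preserving for any sign of the boundary coefficient.) The same absorption works for $w-a$, where the extra boundary term even has the favorable sign; the rest is routine differentiation and bookkeeping.
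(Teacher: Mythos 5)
Your argument is essentially correct, and it is worth noting that the paper itself states Lemma \ref{asda} without proof (these facts are used as standard, in the spirit of Friedman--McLeod and Deng), so your write-up actually supplies the missing argument rather than paralleling one. The individual steps check out: radial symmetry by uniqueness under rotations; positivity by the strong minimum principle plus Hopf against $\partial u/\partial\eta=e^{u^p}>0$; the observation that (\ref{df}) gives $u_{0r}\ge 0$; the computation $J_t=J_{rr}-\frac{n-1}{r}J_r$ for $J=r^{n-1}u_r$ with $J(0,t)=0$, $J(R,t)=R^{n-1}e^{u(R,t)^p}>0$, which avoids the singular zeroth-order term one would get working with $u_r$ itself; and the correct identification of the genuinely delicate point, namely that $w=u_t$ satisfies a Robin condition $\partial w/\partial\eta=cw$ with $c\ge 0$, for which a naive Hopf argument at a negative boundary minimum does not close. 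Your energy/Gronwall resolution of that point is valid (the identity and the trace absorption are right, and $c$ is bounded on $\overline{B}_R\times[0,T']$ for $T'<T$), and the treatment of $w-a$ is correct since the extra boundary term $ca(w-a)$ is nonpositive on $\{w<a\}$. Two small caveats you should acknowledge: the energy identity near $t=0$ (and the reading off of $w(\cdot,0)=\Delta u_0$) relies on $u_t$ being continuous up to $t=0$, which is exactly what the compatibility condition (\ref{b33}) and smoothness of $u_0$ are there for, and is implicitly assumed by the statement $u_t\ge a$ on $[0,T)$; and in the $J$-argument the passage $\delta\to 0$ should be phrased via the weak minimum principle on $[\delta,R]\times[0,T']$ plus uniform continuity, as you indicate. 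As an alternative to the energy step, the more classical device is a change of unknown $w=hv$ with $h>0$ radial satisfying $\partial h/\partial\eta\ge \|c\|_\infty h$ on $\partial B_R$ (e.g.\ $h=e^{K|x|^2/2}$ with $KR\ge\|c\|_\infty$), which converts the boundary condition into one with the favorable sign and lets the standard maximum/comparison principle (as in Pao \cite{21}) give $w\ge 0$ directly; your semigroup remark is the abstract form of the same fact. Either route is fine; yours is self-contained, the classical one stays entirely within the maximum-principle toolkit the paper uses elsewhere.
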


\subsection{Blow-up Rate Estimate }
The following theorem considers the upper blow-up rate estimate of problem (\ref{b1}).
\begin{theorem}\label{ser}
Let u be a blow-up solution to (\ref{b1}), where $\Delta u_0\ge a>0$ in $\overline B_R,$ $T$ is the blow-up  time.Then there exists a positive constant $C$ such that
\begin{equation}\label{hn} \max_{\overline{B}_R}u(x,t) \le \log C -\frac{1}{2p}\log(T-t), \quad  0<t<T.\end{equation} 
\end{theorem}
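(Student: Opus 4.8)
The plan is to mimic the argument of Theorem~\ref{see}, working on the boundary instead of at the center, and exploiting the extra factor of $\tfrac12$ that comes from the Neumann boundary condition $\partial u/\partial\eta = e^{u^p}$ together with the parabolic boundary-point (Hopf) structure. First I would introduce the auxiliary function $F(x,t) = u_t - \alpha f(u)^{2}$ with $f(u)=e^{u^p}$ and $\alpha>0$ to be chosen small; squaring $f$ is natural because for the Neumann problem the blow-up rate of $e^{u}$ (equivalently $f(u)$) is $(T-t)^{-1/2}$, so $f(u)^2$ behaves like $(T-t)^{-1}$, matching the expected growth of $u_t$. A direct computation (using $u_t=\Delta u$ in the interior) should give an interior differential inequality of the form $F_t - \Delta F - c(x,t) F \ge 0$ in $B_R\times(\tau,T)$ for a coefficient $c$ that is bounded on $\overline B_R\times[0,t]$ for each $t<T$, because all the bad terms carry a favorable sign (this is where $f''>0$ and the monotonicity $u_t>0$, $u_r\ge 0$ from Lemma~\ref{asda} are used).

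Next I would check the boundary condition for $F$. On $\partial B_R\times(0,T)$ we have $\partial u/\partial\eta = f(u)$, and differentiating this relation in $t$ gives $\partial u_t/\partial\eta = f'(u)\,u_t$ on the boundary, while $\partial (f(u)^2)/\partial\eta = 2 f(u) f'(u)\,\partial u/\partial\eta = 2 f(u)^2 f'(u)$. Hence on $\partial B_R$,
\begin{equation*}
\frac{\partial F}{\partial\eta} = f'(u)\,u_t - 2\alpha f(u)^2 f'(u) = f'(u)\bigl(u_t - 2\alpha f(u)^2\bigr) = f'(u)\bigl(F - \alpha f(u)^2\bigr).
\end{equation*}
So wherever $F\le 0$ on the lateral boundary one gets $\partial F/\partial\eta \le 0$ (since $f'>0$); this is precisely the sign needed to run a maximum principle for the Neumann problem and conclude $F\ge 0$ everywhere, provided $F\ge 0$ on the bottom $t=\tau$. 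To arrange the latter, I would use part (ii) of Lemma~\ref{asda}: since $\Delta u_0\ge a>0$, we have $u_t\ge a>0$ on $\overline B_R\times[0,T)$, and since the blow-up is only on $\partial B_R$, the quantity $f(u(x,\tau))$ is bounded on $\overline B_R$ uniformly for $\tau$ away from $T$ only in the interior — but in fact $u$ is smooth up to time $\tau<T$, so $f(u(\cdot,\tau))$ is bounded on all of $\overline B_R$; hence choosing $\alpha$ small enough (depending on $\tau$) gives $u_t(x,\tau)\ge \alpha f(u(x,\tau))^2$, i.e. $F(\cdot,\tau)\ge 0$. One must be slightly careful that $\alpha$ is chosen after fixing $\tau<T$; since $\tau$ is fixed once and for all, this is harmless.

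From $F\ge 0$ on $\overline B_R\times[\tau,T)$ I get $u_t\ge \alpha f(u)^2 = \alpha e^{2u^p}$. Evaluating at a point $x(t)\in\partial B_R$ where the maximum $M(t):=\max_{\overline B_R}u(\cdot,t)=u(x(t),t)$ is attained (the maximum is on the boundary by Lemma~\ref{asda}(i)), and using that $M$ is Lipschitz with $M'(t)=u_t(x(t),t)$ a.e., one has $M'(t)\ge \alpha e^{2M(t)^p}$. As in Theorem~\ref{see}, since $M(t)\to\infty$ one has $M(t)\ge p^{1/(p-1)}$ for $t$ near $T$, so $e^{2M^p}\ge e^{2pM}$, giving $M'(t)\ge \alpha e^{2pM(t)}$. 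Separating variables and integrating from $t$ to $T$ (using $e^{-2pM(t)}\to 0$) yields $e^{-2pM(t)}\ge 2p\alpha(T-t)$, hence $e^{2pM(t)}(T-t)\le 1/(2p\alpha)=:C^*$, and therefore $M(t)\le \log C - \tfrac{1}{2p}\log(T-t)$ for a suitable $C>0$, which is exactly \eqref{hn}.

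The main obstacle I anticipate is justifying the maximum principle with the Neumann-type boundary inequality: one needs a version of the parabolic maximum principle on $\overline B_R\times(\tau,T)$ for $F_t-\Delta F - cF\ge 0$ with the oblique (here, conormal) boundary condition $\partial F/\partial\eta\le 0$ on $\partial B_R$ whenever $F$ would be negative. The standard way around the sign subtlety is to argue by contradiction at a first point where $\min F = 0$ is about to be violated: such a point cannot be interior (interior minimum principle) and cannot be on the lateral boundary because Hopf's lemma would force $\partial F/\partial\eta < 0$ there, contradicting $\partial F/\partial\eta = f'(u)(F-\alpha f(u)^2)$ having the appropriate sign when $F$ is at its negative minimum only if one is careful — in fact at a boundary minimum with $F<0$ we get $\partial F/\partial\eta = f'(u)(F-\alpha f(u)^2) < 0$ as well, so one must instead track $F$ against $0$ directly and use that on $\{F=0\}\cap\partial B_R$ one has $\partial F/\partial\eta = -\alpha f'(u)f(u)^2 < 0$, which is the wrong sign for Hopf unless we also know $F\ge 0$ nearby; the clean fix is to work with $\widetilde F = F + \epsilon(e^{\lambda t}-1)$ or to invoke directly the comparison principle for the conormal problem from reference~\cite{14}. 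I would state this carefully and cite the maximum principle for oblique boundary conditions rather than reprove it.
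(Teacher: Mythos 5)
Your overall scaffolding (interior supersolution inequality, nonnegativity at $t=\tau$, Hopf/comparison argument, then the substitution $e^{2u^p}\ge e^{2pu}$ and integration in time) is the right shape and matches the paper's strategy, but the crucial step fails with your choice of auxiliary function. With $F=u_t-\alpha f(u)^2$, $f(u)=e^{u^p}$, the boundary identity you yourself derive, $\frac{\partial F}{\partial\eta}=f'(u)F-\alpha f'(u)f(u)^2$ on $\partial B_R$, says that $F$ satisfies $\frac{\partial F}{\partial\eta}-f'(u)F=-\alpha f'(u)f(u)^2<0$: the boundary inequality has a strictly negative \emph{inhomogeneous} term, so $F$ is a strict subsolution of the Robin-type boundary condition, and no comparison principle (Pao \cite{21} or the conormal one from \cite{14}) can conclude $F\ge0$ from this; as you noticed, at a negative boundary minimum Hopf's lemma and your identity give the \emph{same} sign, so there is no contradiction to exploit. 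Neither of your proposed fixes repairs this: $\widetilde F=F+\epsilon(e^{\lambda t}-1)$ leaves $\frac{\partial F}{\partial\eta}$ unchanged and only worsens the deficit, since $\frac{\partial\widetilde F}{\partial\eta}-f'(u)\widetilde F=-\alpha f'(u)f(u)^2-\epsilon f'(u)(e^{\lambda t}-1)<0$; and the bad term is not a multiple of $F$, so it cannot be absorbed into the zero-order coefficient. To make the boundary flux have the right sign you would essentially need to know already that $u_t\gtrsim f(u)^2$ on $\partial B_R$ — which is the inequality you are trying to prove — so the argument as proposed is circular at its only delicate point.

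The paper (following Deng \cite{17}) avoids this by taking $F=u_t-\varepsilon u_r^2$ instead. On $\partial B_R$ the boundary condition gives $u_r=e^{u^p}$, so nonnegativity of this $F$ at $r=R$ yields exactly the same target inequality $u_t(R,t)\ge\varepsilon e^{2u^p(R,t)}$; the gain is in the normal derivative: $\frac{\partial F}{\partial\eta}=u_{rt}-2\varepsilon u_r u_{rr}$, and using $u_{rt}=(e^{u^p})_t=pu^{p-1}e^{u^p}u_t$ together with the PDE at the boundary, $u_{rr}=u_t-\frac{n-1}{R}u_r$, one gets $\frac{\partial F}{\partial\eta}\ge\bigl(pu^{p-1}-2\varepsilon\bigr)e^{u^p}u_t\ge0$ \emph{unconditionally} once $\varepsilon\le\frac{p}{2}[u_0(R)]^{p-1}$ (using $u_t>0$ and monotonicity of $u$ in $t$ from Lemma \ref{asda}); this is precisely the sign needed for the Hopf/comparison argument, with no circularity. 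The interior inequality $F_t-\Delta F=2\varepsilon\bigl(\frac{n-1}{r^2}u_r^2+u_{rr}^2\bigr)\ge0$ and the initial bound $F(\cdot,0)=\Delta u_0-\varepsilon u_{0r}^2\ge0$ for small $\varepsilon$ (here is where $\Delta u_0\ge a>0$ enters) complete the setup, and from $u_t(R,t)\ge\varepsilon e^{2u^p(R,t)}$ the remainder of your argument — including the reduction of $\max_{\overline B_R}u(\cdot,t)$ to $u(R,t)$ via $u_r\ge0$ and the ODE integration giving \eqref{hn} — goes through verbatim. So the interior, initial-time, and integration portions of your proposal are fine; the genuine gap is the choice of auxiliary function, which must be built from $u_r^2$ rather than $f(u)^2$ for the boundary sign to come out right.
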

\begin{proof}
We follow the idea of \cite{17}, consider the function $$F(x,t)=u_t(r,t)-\varepsilon u^2_r(r,t),\quad  (x,t) \in B_R \times (0,T).$$

By a straightforward calculation 
$$F_t-\Delta F=2 \varepsilon (\frac{n-1}{r^2}u^2_r+u^2_{rr})\ge 0.$$

Since $\Delta u_0\ge a>0,$ and $u_{0r}\in C(\overline B_R),$ 
$$F(x,0)=\Delta u_0(r)-\varepsilon u_{0r}^2(r)\ge 0, \quad x \in B_R.$$ 
provided $\varepsilon$ is small enough. 

Moreover,
\begin{eqnarray*}\frac{\partial F}{\partial \eta}|_{x \in S_R}&=&u_{rt}(R,t)-2\varepsilon u_r(R,t)u_{rr}(R,t)\\ &=&(e^{u^p(R,t)})_t-2\varepsilon e^{u^p(R,t)}(u_t(R,t)-\frac{n-1}{r}u_r(R,t)) \\&\ge& (p[u(R,t)]^{p-1}-2\varepsilon)e^{u^p(R,t)} u_t(R,t).\end{eqnarray*}
Since $$u_t>0, \quad \mbox{on}\quad \overline{B}_R \times (0,T).$$
Thus  $$\frac{\partial F}{\partial \eta}|_{x\in S_R}\ge 0,\quad t \in (0,T)$$ provided $$\varepsilon \le \frac{p[u_0(R)]^{p-1}}{2}.$$ 
From the comparison principle \cite{21}, it follows that
 $$F(x,t)\ge 0,\quad \mbox{in} \quad \overline{B}_R \times ( 0,T),$$ in particular
$F(x,t) \ge 0,$ for $|x|=R,$ that is
$$u_t(R,t) \ge \varepsilon u^2_r(R,t)=\varepsilon e^{2u^p(R,t) },\quad t\in (0,T).$$  

Since $u$ is increasing in time and blows at $T,$ there exist $\tau\le T$ such that
 $$u(R,t)\ge p^{\frac{1}{(p-1)}}\quad \mbox{for}\quad\tau\le t< T,$$ which leads to $$u_t(R,t) \ge\varepsilon e^{2pu(R,t) },\quad t \in [\tau,T).$$

By integration the above inequality from $t$ to $T,$ it follows that
\begin{equation*}
\int_t^{T} u_te^{-2pu(R,t)} \ge \varepsilon (T-t).
\end{equation*}
So
\begin{equation}\label{b8}
-\frac{1}{2p}e^{-2pu(R,t)}|^T_t \ge \varepsilon(T-t).
\end{equation}
Since
\begin{equation*}\label{b9}
u(R,t) \rightarrow \infty ,\quad e^{-pu(R,t)}\rightarrow 0 \quad \mbox{as } \quad t \rightarrow T,
\end{equation*}
 the inequality (\ref{b8}) becomes 
\begin{equation*}
\frac{1}{ e^{pu(R,t)}}\ge (2p\varepsilon (T-t))^{1/2},
\end{equation*} which means 
$$(T-t)^{1/2}e^{pu(R,t)} \le \frac{1}{\sqrt {2p\varepsilon}},$$
Therefore, there exist a positive constant $C$ such that  
$$\max_{\overline{B}_R}u(x,t) \le \log C -\frac{1}{2p}\log(T-t), \quad  0< t<T.$$   
\end{proof}

\end{document}